\newtheorem{theorem}{Theorem}[section]
\newtheorem{lemma}[theorem]{Lemma}
\newtheorem{proposition}[theorem]{Proposition}
\theoremstyle{definition}
\newtheorem{remark}[theorem]{Remark}
\newtheorem{general remarks}[theorem]{General remarks}
\newcommand{\ben}{\begin{enumerate}}
\newcommand{\een}{\end{enumerate}}
 \newcommand{\bee}[1]{\begin{equation}\label{#1}}
 \newcommand{\ene}{\end{equation}}
\newcommand{\I}{\mbox{Id}}
\begin{document}

\title[Multialternating graded polynomials and growth]{Multialternating graded polynomials and growth of polynomial identities}
\author{Eli Aljadeff}
\address{Department of Mathematics, Technion-Israel Institute of
Technology, Haifa 32000, Israel} \email{aljadeff@tx.technion.ac.il}
\author{Antonio Giambruno}
\address{Dipartimento di Matematica e Informatica,
Universit\`a di Palermo, Via Archirafi 34, 90123 Palermo, Italy}
\email{antonio.giambruno@unipa.it}

\date{}

\keywords{graded algebra, polynomial identity, growth, codimensions}

\subjclass[2010]{Primary 16R50, 16P90, 16R10, 16W50}

\thanks{The first author was supported by the ISRAEL SCIENCE FOUNDATION
(grant No. 1283/08) and by the E. SCHAVER RESEARCH FUND. The second
author was partially supported by MIUR of Italy}

\begin{abstract} Let $G$ be a finite group and $A$ a finite dimensional
$G$-graded algebra over a field of characteristic zero.
When $A$ is simple as a $G$-graded algebra, by mean of Regev central polynomials
we construct multialternating graded
polynomials of arbitrarily large degree non vanishing on $A$.
As a consequence we compute the exponential rate of growth of the
sequence of graded codimensions of an arbitrary $G$-graded algebra satisfying an
ordinary polynomial identity. If $c_n^G(A), n=1,2,\ldots$, is the sequence of graded
codimensions of $A$,  we prove that
$exp^G(A)=\lim_{n\to\infty}\sqrt[n]{c_n^G(A)}$, the $G$-exponent of $A$,
exists and is an integer.
This result was proved in \cite{AGL} and \cite{GL}, in case $G$ is abelian.
\end{abstract}

\maketitle

\section*{Introduction}

Let $F$ be a field of characteristic zero and $A$ an $F$-algebra graded by a finite group $G$.
We shall assume throughout that $A$ is a PI-algebra, i.e., it satisfies an
ordinary polynomial identity.
The graded polynomial identities satisfied by $A$ and their growth have been
extensively studied in the last years in an effort to develop a general theory
that generalizes the theory of ordinary polynomial identities.

For instance in analogy with a basic result in Kemer's theory (see \cite{kemer}),
it was recently proved in  \cite{AB}
(and independently in \cite{S} for $G$ abelian) that if $A$ is a finitely
generated algebra, then $A$ satisfies the same graded identities as a finite
dimensional graded algebra. As a consequence one can reduce the study of
an arbitrary $G$-graded PI-algebra to that of the Grassmann envelope of a finite dimensional
$\mathbb{Z}_2\times G$-graded algebra.

Also, in analogy with the results in \cite{gz1} and \cite{gz2} concerning the existence of the exponent of
a PI-algebra, in \cite{AGL} and \cite{GL} it was shown that when $G$ is an abelian group
$exp^G(A)$, the graded exponent of $A$, exists and is an integer.

Here we focus on the growth of the graded codimensions of $A$. Recall that if $P_n^G$
is the space of multilinear graded polynomials of degree $n$  and $\mbox{Id}^G(A)$ is the ideal
of graded identities of $A$, then $c_n^G(A)$, the $n$th $G$-codimension of $A$,
measures the dimension of $P_n^G$ mod. $\mbox{Id}^G(A)$.

It was known that the sequence $c_n^G(A)$, $n=1,2,\ldots,$ is exponentially
bounded (\cite{gia-reg}), but only recently its exponential rate of growth was captured
in case $G$ is an abelian group.
In fact in \cite{AGL} for finitely generated algebras and in \cite{GL}
in general, it was shown that if $A$ is any $G$-graded PI-algebra and $G$ is a finite
abelian group, then  $exp^G(A)=\lim_{n\to\infty}\sqrt[n]{c_n^G(A)}$ exists and is an integer
called the $G$-exponent of $A$.
Moreover the $G$-exponent can be explicitly computed and equals
the dimension of a suitable finite dimensional semisimple graded algebra related to $A$.

In this note we shall extend the above  result by proving that the $G$-exponent exists and
 is an integer in case of arbitrary (non necessarily abelian) groups $G$.
We notice that the proof given in \cite{GL} works also when $G$ is a non abelian
group modulo two basic ingredients that we manage to prove here.
The first result says that the multiplicities in the $n$th graded cocharacter of $A$ are
polynomially bounded, the second is of independent interest and consists on
the construction of suitable multialternating graded polynomials non vanishing in a
finite dimensional $G$-graded algebra which is simple as a graded algebra.

Throughout the paper $F$ will be a field of characteristic zero
and $A$ an associative $F$-algebra satisfying a non-trivial
polynomial identity.
We assume that $A$ is graded by a finite group $G=\{g_1=1,g_2, \ldots, g_s\}$
and we let $A=\oplus_{g\in G}A_g$ be its decomposition into a sum of
homogeneous components.

\section{Multialternating polynomials on $G$-simple algebras}

We start by introducing some standard notation.

We let $F\langle X, G\rangle$ be the free associative $G$-graded
algebra of countable rank over $F$. The set $X$ decomposes as
$X=\bigcup_{i=1}^s X_{g_i},$ where  the sets $X_{g_i}=\{x_{1,g_i},
x_{2,g_i}, \ldots \}$ are disjoint, and the elements of $X_{g_i}$
have homogeneous degree $g_i.$ The algebra $F\langle X, G\rangle$
is endowed with a natural $G$-grading $F\langle X, G\rangle= \oplus_{g\in
G}{\mathcal F}_{g}$, where ${\mathcal F}_{g}$ is the subspace
spanned by the monomials $x_{i_1,g_{j_1}}\cdots x_{i_t,g_{j_t}}$
of  homogeneous degree $g=g_{j_1}\cdots g_{j_t}$.

Recall that a graded polynomial $f \in F\langle X, G \rangle$ is a
graded (polynomial) identity of $A$ if $f$ vanishes under
all graded substitutions $x_{i, g} = a_{g}\in A_{g}$.
Let also $\I^G(A)=\{f\in F\langle X, G \rangle \mid \ f\equiv 0 \
\mbox{on}\ A\}$ be the ideal of graded identities of $A.$

We say that an algebra $A$ is $G$-graded simple if $A$ is a $G$-graded
algebra which is simple as a graded algebra.

Let $A$ be a finite dimensional $G$-graded simple algebra over
an algebraically closed field of characteristic zero.
The purpose of this section it to produce non identity $G$-graded
polynomials with arbitrary many alternating sets of variables which correspond
to the homogeneous components of $A$ and with a bounded number of
extra variables.

A key ingredient in the construction of these polynomials is
a presentation of any $G$-graded simple algebra as a tensor product of
two types of $G$-graded simple algebras, namely a twisted group algebra
(with fine grading) and a matrix algebra with an elementary grading.
Here is the precise statement. It is due to Bahturin, Sehgal and
Zaicev.

\begin{theorem} \cite{BSZ} \label{b}
Let $A$ be a finite dimensional $G$-graded simple algebra. Then there exists a
subgroup $H$ of $G$, a $2$-cocycle $\alpha :H\times H\rightarrow F^{*}$
where the action of $H$ on $F$ is trivial, an integer $r$ and an
$r$-tuple $(g_{1},g_{2},\ldots,g_{r})\in G^{r}$ such that $A$ is
$G$-graded isomorphic to $\Lambda=F^{\alpha}H\otimes M_{r}(F)$ where
$\Lambda_{g}=span_{F}\{\pi_{h}\otimes e_{i,j} \mid g=g_{i}^{-1}hg_{j}\}$.
Here $\pi_{h}\in F^{\alpha}H$ is a representative of $h\in H$ and
$e_{i,j}\in M_{r}(F)$ is the $(i,j)$ elementary matrix.

In particular the idempotents $1\otimes e_{i,j}$ as well as the
identity element of $A$ are homogeneous of degree $e\in
G$.
\end{theorem}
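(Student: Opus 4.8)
The plan is to prove this by developing the graded Wedderburn--Artin structure of $A$ and then separating it into its two tensor factors. Since $A$ is finite dimensional and simple as a graded algebra, its graded Jacobson radical vanishes, and I would invoke the graded Wedderburn--Artin theorem together with the graded double-centralizer theorem: $A$ possesses a faithful graded-simple left module $V$, the graded endomorphism algebra $D=\End_A(V)$ is a finite dimensional graded division algebra (every nonzero homogeneous element invertible), and one obtains a graded isomorphism $A\cong\End_D(V)$. Viewing $V$ as a graded right $D$-module, the first substantial point is that it admits a homogeneous $D$-basis $v_1,\dots,v_r$; recording the degrees $\deg v_i=g_i^{-1}$ produces the tuple $(g_1,\dots,g_r)\in G^r$ and an ungraded isomorphism $\End_D(V)\cong M_r(D)$.

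Next I would analyze the graded division algebra $D$ itself. Its support $H=\supp D$ is a subgroup of $G$: the product of two invertible homogeneous elements is again invertible and homogeneous, and the inverse of an invertible homogeneous element of degree $h$ is homogeneous of degree $h^{-1}$. The identity component $D_e$ is a finite dimensional division $F$-algebra, hence $D_e=F$ because $F$ is algebraically closed. For each $h\in H$, left multiplication by a fixed invertible element of $D_h$ gives an $F$-linear isomorphism $D_e\to D_h$, so $\dim_F D_h=1$. Choosing $0\neq\pi_h\in D_h$ for every $h$, the relations $\pi_h\pi_{h'}=\alpha(h,h')\,\pi_{hh'}$ define a function $\alpha\colon H\times H\to F^{*}$, and associativity of $D$ forces $\alpha$ to satisfy the $2$-cocycle condition. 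Hence $D\cong F^{\alpha}H$ as graded algebras.

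Finally I would assemble the grading. Combining the two steps gives $A\cong M_r(D)=F^{\alpha}H\otimes M_r(F)$ as algebras, and it remains to check that the $G$-grading transported from $A$ is exactly the one in the statement. This is a direct degree computation for the homogeneous elements $\pi_h\otimes e_{i,j}$, realized as the right $D$-linear endomorphism $v_k\mapsto\delta_{jk}\,v_i\pi_h$: its degree is $\deg(v_i\pi_h)\,\deg(v_j)^{-1}=(g_i^{-1}h)\,g_j=g_i^{-1}hg_j$, which is precisely the claimed $\Lambda_g=\span_F\{\pi_h\otimes e_{i,j}\mid g=g_i^{-1}hg_j\}$; multiplicativity is confirmed by $(g_i^{-1}hg_j)(g_j^{-1}h'g_k)=g_i^{-1}hh'g_k$. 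The concluding assertion is then immediate: taking $h=e$ and $i=j$ gives $\deg(1\otimes e_{i,i})=g_i^{-1}g_i=e$, so the diagonal idempotents, and hence their sum $1_A=\sum_i 1\otimes e_{i,i}$, are homogeneous of degree $e$.

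I expect the main obstacle to be the graded Wedderburn--Artin and double-centralizer step for a group-graded algebra: establishing that $V$ has a homogeneous $D$-basis and that the graded density theorem applies, together with fixing the correct shifts $\deg v_i=g_i^{-1}$ so that the final degree formula reads $g_i^{-1}hg_j$ rather than a variant. Once a homogeneous basis is in place, the identification of $D$ with $F^{\alpha}H$ and the degree bookkeeping are routine.
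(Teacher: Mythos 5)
The paper offers no proof of this statement: it is quoted verbatim from Bahturin--Sehgal--Zaicev \cite{BSZ}, so there is nothing internal to compare your argument against. Your sketch is, in substance, the standard proof of that classification and it is correct: graded semisimplicity gives a faithful graded-simple module $V$ (a minimal graded left ideal), graded Schur and the graded double-centralizer theorem give $A\cong\End_D(V)$ with $D$ a graded division algebra, a homogeneous $D$-basis of $V$ yields the tuple $(g_1,\dots,g_r)$, and the identification $D\cong F^{\alpha}H$ uses that $D_e$ is a finite dimensional division algebra over $F$, hence equals $F$. Note that this last step, and hence the whole statement, requires $F$ algebraically closed --- a hypothesis not written into the theorem itself but imposed in the surrounding text of the paper, and which you correctly invoke. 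Your degree bookkeeping ($\deg v_i=g_i^{-1}$ giving $\deg(\pi_h\otimes e_{i,j})=g_i^{-1}hg_j$) and the concluding observation about the idempotents $1\otimes e_{i,i}$ are also right; the only steps you flag as potential obstacles (existence of a homogeneous $D$-basis, the graded density argument) are indeed the ones needing care, but both are standard and go through for finite dimensional algebras graded by a finite group.
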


Let $t_1, \ldots, t_s>0$ be integers and, for $i=1, \ldots s$, define
$$
X_{g_i}^j=\{x_{1,g_i}^j,\ldots,
x_{m_i,g_i}^j\}\subseteq X_{g_i}, \ 1\le j\le  t_i,
$$
$t_i$ distinct
sets consisting of  $m_i\geq 0$ variables each of homogeneous degree $g_i$. Let also
$Y\subseteq \bigcup_{i=1}^s X_{g_i}$  be another set of homogeneous
variables disjoint from the previous sets.

Also, let $f=f(X^1_{g_1}, \ldots, X^{t_1}_{g_1}, \ldots, X^1_{g_s},
\ldots, X^{t_s}_{g_s},Y)\in F\langle X,G \rangle$ be a multilinear
graded polynomial in the variables from the sets $X^j_{g_i}$ and
$Y,$ $1\le i \le s$ and $1\le j \le t_i.$

This section is devoted to the proof of the following

\begin{theorem} \label{theo1}
Let $F$ be an algebraically closed field of characteristic zero
and $A$ a finite dimensional $G$-graded simple algebra over $F$.
For any $t\ge 1$, there exist integers
$$
2t\le t_1, \ldots, t_s \le 2t|G|
$$
and a $G$-graded polynomial

$$
f_t(X^{(t_{1},\ldots,t_{s})}_{G};Y)
= f_t(X^{1}_{g_1},\ldots, X^{t_1}_{g_1}, X^{1}_{g_2},
\ldots, X^{t_2}_{g_2},\ldots, X^{1}_{g_s},\ldots, X^{t_s}_{g_s}; Y)
$$
such that
\begin{enumerate}
\item
$f_t(X^{(t_{1},\ldots,t_{s})}_{G};Y)$ is not an identity of $A$; in particular it has an
evaluation in $A$ of the form $1\otimes e_{i,j}$.

\item
the cardinality of $Y$ depends on the order of $G$ and the
dimension of $A$ and not on the parameter $t$. In particular, the
cardinality of $Y$ is bounded.

\item
For every $j$ and $g\in G$, $|X^{j}_{g}|=\dim A_g$.

\item
$f_t(X^{(t_{1},\ldots ,t_{s})}_{G};Y)$  is alternating
on each one of the sets $X^{j}_{g}$.

\end{enumerate}
\end{theorem}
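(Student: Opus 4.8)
The plan is to carry out the whole construction inside the concrete model $\Lambda=F^{\alpha}H\otimes M_{r}(F)$ furnished by Theorem~\ref{b}, and to obtain $f_t$ as a $G$-graded refinement of Regev's central polynomial, glued to itself $t$ times in a multiplicative fashion. The starting point is the symmetrizing trace form on $\Lambda$: let $\tau$ be the linear functional on $F^{\alpha}H$ reading off the coefficient of $\pi_{e}$, let $\Tr$ be the usual trace on $M_{r}(F)$, and set $\langle x\otimes M,\,y\otimes N\rangle=\tau(xy)\Tr(MN)$. One checks that this form is associative and nondegenerate, and that it is \emph{graded}: it pairs $\Lambda_{g}$ nondegenerately with $\Lambda_{g^{-1}}$ and kills $\Lambda_{g}\times\Lambda_{h}$ whenever $gh\neq e$. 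Fixing, for each $g\in G$, a homogeneous basis $B_{g}$ of $\Lambda_{g}$ together with its dual basis $B_{g}^{*}\subseteq\Lambda_{g^{-1}}$, a direct computation with the matrix units $e_{i,j}$ yields a graded trace identity of the form $\sum_{g\in G}\sum_{\beta\in B_{g}}\beta\,w\,\beta^{*}=\lambda\,\Tr(w)\,1_{\Lambda}$ for a nonzero scalar $\lambda$. This identity is the engine of the argument, since it converts a sum over a homogeneous basis into a central evaluation.

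Next I would build a single \emph{layer}: a multilinear graded polynomial, alternating separately on a set of $\dim\Lambda_{g}$ variables of degree $g$ for each $g$, that evaluates on $\Lambda$ to a matrix unit $1\otimes e_{i,j}$. Following Regev, one interleaves the homogeneous variables with a bounded number of auxiliary (connector) variables and antisymmetrizes; the graded trace identity above guarantees that, under the substitution sending the $g$-variables to $B_{g}$ and their partners to $B_{g}^{*}$, all non-telescoping terms cancel and the surviving term is a nonzero multiple of $1\otimes e_{i,j}$. Because the form couples $\Lambda_{g}$ with $\Lambda_{g^{-1}}$, each degree $g$ occurs once in its ``basis'' role (variables of degree $g$) and once in its ``dual'' role (dual vectors of $B_{g^{-1}}$, which again have degree $g$), so that a single layer already carries two alternating sets of each degree, each of size $\dim\Lambda_{g}$.

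Finally I would stack $t$ such layers. Choosing the target idempotent to be diagonal, $1\otimes e_{i,i}$, and using fresh homogeneous variables in each layer, the product of the $t$ layer-polynomials is again multilinear, is alternating on every one of the fresh sets (each factor involving only the variables of its own layer), and evaluates to $(1\otimes e_{i,i})^{t}=1\otimes e_{i,i}\neq 0$; hence it is not an identity. The two-sets-per-degree-per-layer count gives the lower bound $t_{g}\ge 2t$, while the need to balance the blocks of $M_{r}(F)$ and the different factorizations realizing a fixed degree $g=g_{i}^{-1}hg_{j}$ can inflate some counts by a factor at most $|G|$, giving $t_{g}\le 2t|G|$; in all cases $|X^{j}_{g}|=\dim\Lambda_{g}$ by construction. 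The extra variables $Y$ are exactly the connector variables of the layers together with the finitely many elements needed to pin the evaluation down to $1\otimes e_{i,i}$; crucially, since the layers are glued through this idempotent and their central outputs simply multiply, one bounded supply of connectors serves all $t$ layers, so $|Y|$ depends only on $|G|$ and $\dim A$ and not on $t$.

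The main obstacle is precisely the graded refinement of Regev's polynomial together with this bookkeeping: one must produce a central evaluation that is simultaneously alternating degree-by-degree (rather than on one global basis), that genuinely lands on a single matrix unit rather than merely on a scalar matrix, and that survives stacking with a number of auxiliary variables bounded independently of $t$. Checking that the antisymmetrization does not kill the telescoping term in the presence of the twisted group algebra $F^{\alpha}H$ and the elementary grading on $M_{r}(F)$ at once—the feature genuinely new beyond the abelian case treated in \cite{AGL} and \cite{GL}—is where the real work lies, and pinning each $t_{g}$ to the range $[2t,2t|G|]$ is the delicate part of the count.
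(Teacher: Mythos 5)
There is a genuine gap, and it sits exactly where you admit ``the real work lies.'' Your engine is the claimed graded trace identity $\sum_{g}\sum_{\beta\in B_{g}}\beta\,w\,\beta^{*}=\lambda\,\Tr(w)\,1_{\Lambda}$ on $\Lambda=F^{\alpha}H\otimes M_{r}(F)$. This Casimir-type identity holds for central simple algebras, but $F^{\alpha}H$ (hence $\Lambda$) is in general not central simple: already for $H=G=\mathbb{Z}_{2}$, $r=1$, trivial cocycle, one gets $\sum_{\beta}\beta\,w\,\beta^{*}=w+uwu=2w$, which is not a scalar multiple of $1$ when $w=u$. What the dual-basis sum actually produces is $\Tr(M)\bigl(\sum_{h}c_{h}\pi_{h}x\pi_{h^{-1}}\bigr)\otimes 1$, and the group-algebra factor does not collapse. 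Consequently the claim that the antisymmetrized layer ``lands on a single matrix unit'' is unproved, and the obstruction is real: when the variables are evaluated on basis elements $\pi_{h}\otimes e_{i,j}$, different permutations reorder the $\pi_{h}$'s, and for a noncommutative or nontrivially twisted $F^{\alpha}H$ the resulting monomials acquire \emph{different} trivial-unit factors, so the non-cancellation that Regev's theorem guarantees for $M_{r}(F)$ alone is no longer available. Deferring this point defers the entire theorem.

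The paper's proof is organized precisely to evade this. It first reduces to one cyclic subgroup $S=\langle g\rangle$ at a time (the full $f_{t}$ is the product $\prod_{g\in G}f_{t,g}$, which is also where the bounds $2t\le t_{i}\le 2t|G|$ really come from). The subalgebra $A_{S}$ is semisimple by Quinn/Passman, and its $S$-simple summands have the form $FC_{i}\otimes M_{p_{i}}(F)$ with $C_{i}$ cyclic, so the group-algebra factor is an honest commutative untwisted group algebra; the Regev evaluation then multiplies every monomial by the \emph{same} trivial unit and reduces to the pure matrix case. Each summand is further split into ``pages'' whose weights are disjoint, so that alternation on a page is alternation degree-by-degree; the summands are then bridged by the extra variables $Y$ and alternated against each other to reach sets of full size $\dim A_{g^{\nu}}$. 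If you want to salvage your direct approach you would have to prove the non-vanishing of the alternating sum over a noncommuting family of trivial units, which is not done here; otherwise the reduction to cyclic subgroups is the missing idea.
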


In view of the theorem above we claim that it is sufficient to
construct $G$-graded polynomials, which are non identities of $A$,
and correspond to the cyclic subgroups of $G$.

In order to make the statement precise, let $g$ be any element of
$G$ and let $S = \langle g\rangle$ be the subgroup it generates. We denote
by $d$ the order of $S$.

\begin{proposition}
It is sufficient to construct, for any integer $t\ge 1$,  a $G$-graded polynomial
(non identity of $A$)
$$
f_{t,g}(X_{S};Y_{S})
= f_{t,g}(X^{1}_e, \ldots, X^{2t}_e, X^{1}_g, \ldots, X^{2t}_g, \ldots, X^{1}_{g^{d-1}}, \ldots, X^{2t}_{g^{d-1}}; Y_{S})
$$
where
\begin{enumerate}

\item
$\mid Y_{S}\mid \leq r-1$ ($r$ is the cardinality of the tuple
which provides the elementary grading on $A$).

\item
for every $i=1,\ldots ,2t$, and every $0 \leq j \leq d-1$, we have that
$|X^{i}_{g^{j}}|=\dim A_{g^{j}}$.

\item

$f_{t,g}(X_{S};Y_{S})$ is alternating on the set
$X^{i}_{g^{j}}$ for every $i=1,\ldots,2t$, and every $0 \leq j \leq d-1$.

\item

$f_{t,g}(X_{S};Y_{S})$ admits a non-zero $G$-graded evaluation on $A$ of the form $\pi_{g^{l}}\otimes e_{1,r}$.

\end{enumerate}

\begin{remark}

Clearly, by adding an extra variable if necessary, we may assume
that the value of the polynomial above is $1 \otimes e_{1,1}$.

\end{remark}

\end{proposition}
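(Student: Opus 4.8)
The plan is to show that the seemingly weaker local statement (constructing polynomials only for cyclic subgroups $S=\langle g\rangle$) suffices to build the global polynomials $f_t$ required by Theorem~\ref{theo1}. The key structural input is Theorem~\ref{b}: after fixing the $\mathrm{BSZ}$-presentation $A\cong F^\alpha H\otimes M_r(F)$, every homogeneous component $A_g$ has a basis consisting of elements $\pi_h\otimes e_{i,j}$ with $g=g_i^{-1}hg_j$, and crucially the identity element and the diagonal idempotents $1\otimes e_{i,i}$ sit in degree $e$. This means that the ``grading data'' of $A$ is entirely controlled by the interplay of the subgroup $H$ and the tuple $(g_1,\ldots,g_r)$, and one can access each piece by passing through a single cyclic subgroup at a time.

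First I would take a single element $g\in G$, set $S=\langle g\rangle$ of order $d$, and use the hypothesized polynomial $f_{t,g}(X_S;Y_S)$ from the Proposition. By the Remark I may normalize its value to the idempotent $1\otimes e_{1,1}$. Next I would \emph{relocate} and \emph{splice} these cyclic polynomials: for a fixed $t$, I run over a set of elements $g$ whose cyclic subgroups together exhaust all of $G$ (so that every homogeneous degree $h\in G$ is covered $2t$ times by alternating sets), and I multiply the corresponding normalized polynomials together, inserting variables of degree $e$ to translate between the matrix idempotents $1\otimes e_{1,1}$ so the product does not collapse to zero. Because each factor evaluates to a diagonal idempotent in degree $e$, the product evaluates to $1\otimes e_{1,1}$ as well, hence is a non-identity and satisfies item~(1). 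The number of cyclic subgroups needed is at most $|G|$, which is why the resulting number of alternating sets $t_i$ lies between $2t$ and $2t|G|$, giving the bounds in Theorem~\ref{theo1}.

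I would then verify the remaining items. For item~(3), the alternating sets $X^j_g$ produced for each degree $g$ have exactly $\dim A_g$ variables by construction (item~(2) of the Proposition), and splicing preserves this. For item~(4), alternation on each $X^j_g$ is inherited from the cyclic polynomials, since the gluing only introduces a bounded collection of auxiliary degree-$e$ variables that are \emph{not} part of any alternating set; these go into $Y$. For item~(2), the total set $Y$ is the union over the (at most $|G|$) cyclic factors of the $Y_S$ (each of size $\le r-1$) together with the bounded number of splicing variables, so $|Y|$ is bounded by a quantity depending only on $|G|$, $r$, and $\dim A$, and not on $t$ — exactly as required.

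The main obstacle I anticipate is the splicing step: ensuring that the product of the cyclic polynomials admits a \emph{simultaneous} non-zero evaluation, rather than each factor merely being individually non-vanishing. The subtlety is that the evaluation witnessing $f_{t,g}$ nonzero is local to the cyclic structure of $\langle g\rangle$, and concatenating monomials of different ``cyclic origin'' could annihilate in $M_r(F)$ unless the idempotent targets are matched up. This is precisely what the normalization to $1\otimes e_{1,1}$ (via the Remark) buys us: since every factor both starts and ends at the same idempotent $e_{1,1}$, the factors chain together compatibly and the product lands on $1\otimes e_{1,1}$. Making this chaining rigorous — and confirming the degree-$e$ connecting variables can be chosen homogeneous and consistent with the grading — is the technical heart of the reduction, but it is a matter of bookkeeping with the $\mathrm{BSZ}$-presentation rather than a genuinely new idea.
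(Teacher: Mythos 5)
Your proposal is correct and takes essentially the same route as the paper: the paper's proof is simply to set $f_t=\prod_{g\in G}f_{t,g}(X_S;Y_S)$, relying (as you do) on the Remark's normalization of each factor to the idempotent $1\otimes e_{1,1}$ so that the factors chain to a nonzero value, with the bounds $2t\le t_i\le 2t|G|$ coming from the fact that each homogeneous degree lies in at least one and at most $|G|$ of the cyclic subgroups $\langle g\rangle$. Your extra degree-$e$ splicing variables are in fact unnecessary once all factors are normalized to the same idempotent, but this does not affect correctness.
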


\begin{proof}

Indeed, having constructed the polynomials $f_{t,g}=
f_{t,g}(X_{S};Y_{S})$ above, the required polynomials are given by

$$
f_{t}(X^{(t_{1},\ldots,t_{s})}_{G};Y) = \prod_{g \in G} f_{t,g}(X_{S};Y_{S}),
$$
where $\mid Y\mid = \sum_{g\in G}Y_{S}.$

\end{proof}

Consider the subalgebra $A_S=\oplus_{i=0}^{d-1} A_{g^i}\subseteq A$.
By \cite[Theorem 1.6]{quinn} (see also \cite[Theorem 18.13]{passman}), it is semisimple
and so it decomposes into the direct sum of $S$-graded simple algebras
$$
A_S \cong B_1\oplus B_2 \oplus \cdots \oplus B_l.
$$
It follows from Bahturin, Sehgal and Zaicev' result that for every $i=1,\ldots,l$, there exists a
subgroup $C_{i} \leq S$ and a $p_i$-tuple $(w_{i,1},\ldots,w_{i,p_i})$
of elements in $S$ (which determines the elementary grading on
$M_{p_i}(F)$) such that
$$
B_{i} \cong FC_{i}\otimes M_{p_i}(F)
$$
as $S$-graded algebras.

Notice that since $C_{i}$ is cyclic, $H^{2}(C_i, F^{*})=0,$  $1\le i\le l$.

The structure of $A_S$ is given here up to an $S$-graded
isomorphism. But we need more. In fact, in order to ``bridge" the
$S$-simple components $B_i$ by elements of $A$, we need to realize
the algebra $A_S$ as a subalgebra of $A$ in terms of its
presentation (i.e. with the terminology of Theorem \ref{b}). Here is
the precise statement.

First we make a definition: for a homogeneous subspace $D$ of
$A$ we define $weight(D) =\{g_i\in G \mid D\cap A_{g_i}\ne 0\}$.

\begin{proposition} \label{pro15}
With the above notation:
\begin{enumerate}

\item
For every $i=1,...,l$, $B_i \cong FC_i\otimes M_{p_i}(F)$ where $C_i=H^{g_{j_i}}\cap S$.
\item

In terms of the presentation of $A$, after a possible permutation
of the $r$-tuple $(g_1,\cdots,g_r)$ $($the tuple which provides
the elementary grading in the presentation of $A$$)$, we have
$$
B_i = \mbox{span} \{\pi_{h} \otimes e_{u,v} \in A_S\mid p_1+p_2+\cdots+p_{i-1}+1 \leq u,v \leq p_1+p_2+\cdots+p_{i}\}.
$$
In particular $p_1+p_2+\cdots+p_l=r.$

\item
For every $i=1,...,l$,
$$
B_i= B_{i,1}\oplus \cdots \oplus B_{i,c_{i}}
$$
(direct sum of vector spaces that we shall call ``pages")
where

\begin{enumerate}

\item
$B_{i,k} = \mbox{span} \{\pi_{h_{u,v}} \otimes e_{u,v} \in A_{S}\mid
p_1+p_2+\cdots+p_{i-1}+1 \leq u,v \leq p_1+p_2+\cdots+p_{i}\}$
(for any pair $(u,v)$ as above we choose a suitable $h=h_{u,v}
\in H$).

\item
Any homogeneous component of $B_{i}$ is ``concentrated'' in a
unique page. More precisely, $weight(B_{i,k}) \cap weight(B_{i,l})
=\emptyset$, if $k \neq l$.

\end{enumerate}
\end{enumerate}
\end{proposition}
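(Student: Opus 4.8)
The plan is to read off the structure of $A_S$ directly from the presentation $A\cong F^{\alpha}H\otimes M_r(F)$ of Theorem \ref{b}, and then to match this concrete picture with the abstract decomposition $A_S\cong B_1\oplus\cdots\oplus B_l$. First I would introduce a relation on the index set $\{1,\dots,r\}$ of the matrix units by declaring $u\sim v$ whenever $g_u^{-1}Hg_v\cap S\neq\emptyset$, equivalently whenever some $\pi_h\otimes e_{u,v}$ lies in $A_S$. Using only that $H$ is a subgroup and that $S$ is closed under multiplication, one checks reflexivity, symmetry and transitivity, so $\sim$ is an equivalence relation. Since $\pi_h\otimes e_{u,v}\in A_S$ precisely when $u\sim v$, and since the matrix units multiply as $e_{a,b}e_{u,v}=\delta_{b,u}e_{a,v}$, each equivalence class $I_i$ spans a two-sided homogeneous ideal of $A_S$, and these ideals are orthogonal and sum to $A_S$. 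Permuting the tuple $(g_1,\dots,g_r)$ so that every class consists of consecutive indices, the $i$-th class, of size $p_i$, yields exactly the span displayed in part (2), and $\sum_i p_i=r$ because the classes partition $\{1,\dots,r\}$.

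Next I would identify each block. Fixing a base index $j_i\in I_i$ and writing $H^{g_{j_i}}=g_{j_i}^{-1}Hg_{j_i}$, the diagonal elements $\pi_h\otimes e_{j_i,j_i}$ of $A_S$ are those with $g_{j_i}^{-1}hg_{j_i}\in S$, so the group part of the block is $C_i:=H^{g_{j_i}}\cap S$, which is part (1). This is a subgroup of the cyclic group $S$, hence cyclic, so $H^2(C_i,F^*)=0$ and the twist disappears, explaining the untwisted factor $FC_i$. To see that $C_i$ is independent of the chosen representative, I would use that any other $v\in I_i$ satisfies $g_v=\eta^{-1}g_{j_i}s$ for some $\eta\in H$, $s\in S$, whence $H^{g_v}\cap S=s^{-1}(H^{g_{j_i}}\cap S)s=H^{g_{j_i}}\cap S$, the last equality because $S$ is abelian. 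The same normalisation gives, for every position $(u,v)$ in the block, the key coordinate formula: the degree of $\pi_h\otimes e_{u,v}$ equals $c\,\omega_u^{-1}\omega_v$ with $c\in C_i$, where $\omega_u\in S$ is determined by the chosen $\eta_u\in H$, and as $h$ runs over the admissible elements $c$ runs bijectively over $C_i$. Consequently the set of degrees occurring at $(u,v)$ is the full coset $\gamma_{u,v}=(\omega_u^{-1}\omega_v)C_i$, and these cosets obey $\gamma_{u,v}\gamma_{v,w}=\gamma_{u,w}$; this exhibits the elementary grading, confirms $B_i\cong FC_i\otimes M_{p_i}(F)$ as $S$-graded algebras, and shows that the concrete equivalence-class blocks coincide with the abstract $S$-simple summands.

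Finally I would construct the pages. At a fixed position $(u,v)$ there is at most one basis element of any prescribed degree $s$, namely $\pi_{g_u s g_v^{-1}}\otimes e_{u,v}$, and the degrees that actually occur fill out the single coset $\gamma_{u,v}$. For each coset $\gamma$ of $C_i$ in $S$ that occurs I would fix a representative $\delta_\gamma$, so that the degrees in $\gamma$ are labelled bijectively by $C_i$ via $c\mapsto c\delta_\gamma$. Then for each $c\in C_i$ I would define the page $B_{i,c}$ to be the span of those basis elements whose degree at their position $(u,v)$ is the $c$-labelled element $c\delta_{\gamma_{u,v}}$. Each $B_{i,c}$ contains exactly one matrix unit per position, so it has the form required in (3)(a), and there are $c_i=|C_i|$ of them. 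Disjointness of weights, part (3)(b), is then essentially automatic: a homogeneous component $(B_i)_s$ lives in the single coset $sC_i$, hence in the single page carrying the label of $s$; and two degrees belonging to different pages either sit in different $C_i$-cosets, and so differ, or sit in the same coset but carry different labels, and so differ as well.

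The genuine work sits in the middle paragraph: extracting from the presentation the well-defined subgroup $C_i$ and the coset-valued ``cocycle'' $\gamma_{u,v}$, and checking that the concrete equivalence-class blocks are $S$-graded simple so that they really are the summands $B_i$. Once these coordinates are in place the paging in part (3) is a bookkeeping statement, with disjointness falling out of the fact that distinct cosets of $C_i$ in $S$ are disjoint.
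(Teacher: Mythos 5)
Your proposal is correct and follows essentially the same route as the paper: the same equivalence relation $u\sim v \Leftrightarrow g_u^{-1}Hg_v\cap S\neq\emptyset$ on matrix indices, the same identification of the blocks with $FC_i\otimes M_{p_i}(F)$ via coset representatives normalising the elementary grading (the paper's map $\phi_k$ and Lemma \ref{lem17}), and the same construction of the pages by indexing the degrees at each position by the coset $\gamma_{u,v}$ of $C_i$ and labelling them by elements of $C_i$. You supply somewhat more detail than the paper (which omits the verification of the isomorphism and derives part (3) by ``ordering the coset's elements''), but there is no substantive difference in method.
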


Before proving the proposition let us show how to construct the
polynomial $f_{t,g}(X_{S};Y_{S})$.

For each ``page'' $B_{i,k}$ we construct a Regev polynomial
$f^{k}_{2p^{2}_i}$ (see \cite{for}), with $2p^{2}_i$ variables whose homogeneous
degrees are as the homogeneous degrees of $B_{i,k}$ and let

$$f_{i}=f^{1}_{2p^{2}_i}f^{2}_{2p^{2}_i}\cdots f^{c_{i}}_{2p^{2}_i}.$$

Now, the evaluation of $f^{k}_{2p^{2}_i}$ on a basis of $B_{i,k}$
gives an element of the form $\pi_{b_k} \otimes 1_{p_i \times p_i}$
where $\pi_{b_k}$ is a trivial unit of $FC_{i}$. This is a slight
abuse of notation since in fact the identity matrix $1_{p_i \times
p_i}$ is located in the block diagonal between rows $p_1+p_2+
\cdots + p_{i-1}+ 1$ and $p_1+p_2+ \cdots +p_{i}$.

\begin{remark}

This is the place where we use the fact that $C_i$ is a cyclic
group. Indeed, the evaluation of a Regev polynomial on the space
$B_{i,k}$ has the same effect as the evaluation on $p_i \times
p_i$ matrix algebra where all monomials are multiplied by the same
trivial unit which is obtained as the product of {\it commuting}
trivial units of $FC_{i}$.

\end{remark}

From the construction of $f_i$ we see that it has an evaluation of
the form $\pi_{b^{'}_i}\otimes 1_{p_i \times p_i}$ where
$\pi_{b^{'}_i}$ is a trivial unit of $FC_{i}$. Note that since the
homogeneous degrees of the spaces $B_{i,k}$ are disjoint for
different $k$, the polynomial $f_i$ is alternating (in
particular) on sets of $g^{\nu}$-variables, any $g^{\nu}\in S$, of
cardinality which is equal to the dimension of the
$g^{\nu}$-homogeneous subspace of $B_i$.

In order to get arbitrary many alternating sets we let $f^{t}_{i}$
be the product of $t$-copies (with disjoint sets of
variables) of $f_i$. Clearly, the evaluation of $f^{t}_{i}$ on a
basis of $B_{i}$ gives an element of the form $\pi_{a_i}\otimes
1_{p_i \times p_i}$ where $\pi_{a_i}$ is a trivial unit in $FC_i$.

Thus we have constructed a polynomial $f^{t}_{i}$ for any $B_i$.
We can now ``bridge" these polynomials and get a polynomial
$$
\phi_{S}=x_{0}f^{t}_{1}x_{1}f^{t}_{2}\cdots f^{t}_{l}x_{l+1}.
$$
We observe that with suitable evaluations on the $x$'s the polynomial $\phi_{S}$
has an evaluation which is equal to $1\otimes e_{1,1}$. But we are
not done yet. We still need to alternate variables with the same
homogeneous degrees in the different polynomials $f^{t}_{i}$ in
a suitable way. More precisely, for every $s=1,\ldots,t$, we alternate
all variables with the same homogeneous degrees which appear in
the polynomials $f^{s}_{1},f^{s}_{2}, \ldots, f^{s}_{l}$. Clearly,
because of the bridging variables $x_j$, the resulting
polynomial $f_{t,g}(X_{S};Y_{S})$ admits the value $1\otimes e_{1,1}$ and has
the required form.

Let us prove now the proposition above. To set up the notation
again we recall that $A$ has a presentation given by
$F^{\alpha}H\otimes M_{r}(F)$ and the elementary grading is given
by the $r$-th tuple $(g_1, g_2,\ldots,g_r)$. We let $S$ be the cyclic
group generated by $g \in G$ and we denote by $d$ its order. Let
us introduce an equivalence relation on the index set
$\{1,\ldots,r\}$ by setting $i \sim j$ if and only if
$g^{-1}_{i}Hg_{j} \cap S \neq \emptyset$. It is easy to see that
this is indeed an equivalence relation and we let
$$
\Omega_{1},\ldots,\Omega_{l}
$$
be the equivalence classes. We may clearly assume (after reordering the tuple $(g_1,
g_2,...,g_r)$) that equivalent indices are adjacent to each other.
In other words we have integers $p_{1},\ldots,p_{l}$ such that
$$
\Omega_{1}=\{1,\ldots,p_1\}, \Omega_{2}=\{p_{1}+1,\ldots,p_{1}+p_{2}\},
$$
$$
\ldots ,
\Omega_{l}=\{p_{1}+\cdots +p_{l-1}+1,\ldots,p_{1}+\cdots +p_{l}=r\}.
$$
We shall replace (as we may) elements from the
$r$-tuple which represent the same right $H$-coset by the same
representative.

Consider indices $i,j \in \Omega_{k}$. We know (by the definition
of the equivalence relation) that there exists an $h \in H$ such
that $g^{-1}_ihg_j \in S$. We claim that the number of elements $h
\in H$ with that property depends on the index $k$ but not on $i$
and $j$. In other words we claim the following.

\begin{lemma} \label{lem17}
For every $i,j, k_{0} \in \Omega_{k}$, $|g^{-1}_iHg_j \cap S| =
|g^{-1}_{k_0}Hg_{k_{0}} \cap S|$. Furthermore, the sets
$g^{-1}_iHg_j \cap S$ are $g^{-1}_{k_0}Hg_{k_{0}}\cap S$-cosets in
$S$.
\end{lemma}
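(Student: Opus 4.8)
The goal is to understand the set $g_i^{-1}Hg_j \cap S$ for indices $i,j$ in the same equivalence class $\Omega_k$, and to show two things: first that its cardinality depends only on $k$, and second that each such set is a coset in $S$ of the fixed subgroup $g_{k_0}^{-1}Hg_{k_0} \cap S$. Let me think about what structure is available.

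Let me denote $S = \langle g \rangle$ of order $d$. For a single index $k_0$, consider $K := g_{k_0}^{-1}Hg_{k_0} \cap S$. Since $g_{k_0}^{-1}Hg_{k_0}$ is a subgroup of $G$ (being conjugate to the subgroup $H$) and $S$ is a subgroup, the intersection $K$ is a subgroup of $S$, hence a subgroup of the cyclic group $S$. This is the anchor: $K$ is genuinely a subgroup.

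Now the definition of $i \sim j$ is that $g_i^{-1}Hg_j \cap S \neq \emptyset$.

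The plan is to analyze the set $g_i^{-1}Hg_j\cap S$ by first treating the diagonal case and then transporting to the off-diagonal case. For each index $i$ put $K_i:=g_i^{-1}Hg_i\cap S$. Since $g_i^{-1}Hg_i$ is a subgroup of $G$ (a conjugate of $H$) and $S$ is a subgroup, $K_i$ is a subgroup of the cyclic group $S$. The whole statement will follow once I show (a) that for $i\sim j$ the set $g_i^{-1}Hg_j\cap S$ is a single right coset of $K_i$ in $S$, and (b) that $K_i=K_{k_0}$ for all $i$ in the class $\Omega_k$ containing $k_0$.

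First I would prove (a). Fix $s'\in g_i^{-1}Hg_j\cap S$, which exists because $i\sim j$, and write $s'=g_i^{-1}h'g_j$. If $s=g_i^{-1}hg_j$ is any other element of the set, then $s(s')^{-1}=g_i^{-1}h(h')^{-1}g_i\in g_i^{-1}Hg_i$, and it lies in $S$ as a product of two elements of $S$; hence $s(s')^{-1}\in K_i$, so $s\in K_is'$. Conversely, for $k=g_i^{-1}h''g_i\in K_i$ one has $ks'=g_i^{-1}h''h'g_j\in g_i^{-1}Hg_j$, and $ks'\in S$ since $K_i\subseteq S$; thus $K_is'\subseteq g_i^{-1}Hg_j\cap S$. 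This gives the equality $g_i^{-1}Hg_j\cap S=K_is'$, so the set is a right coset of $K_i$ and in particular has cardinality $|K_i|$.

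The crux is step (b), and this is exactly where the cyclicity of $S$ is used. Since $i\sim k_0$, choose $a=g_i^{-1}h_1g_{k_0}\in S$ with $h_1\in H$. For $k=g_{k_0}^{-1}hg_{k_0}\in K_{k_0}$ I compute $aka^{-1}=g_i^{-1}h_1hh_1^{-1}g_i\in g_i^{-1}Hg_i$. On the other hand $a$ and $k$ both lie in the cyclic (hence abelian) group $S$, so conjugation by $a$ is the identity on $S$ and $aka^{-1}=k$. Therefore $k\in g_i^{-1}Hg_i\cap S=K_i$, proving $K_{k_0}\subseteq K_i$; exchanging the roles of $i$ and $k_0$ gives the reverse inclusion, hence $K_i=K_{k_0}$. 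Combining with (a), every set $g_i^{-1}Hg_j\cap S$ with $i,j\in\Omega_k$ is a coset of the single subgroup $K:=g_{k_0}^{-1}Hg_{k_0}\cap S$, which yields both the equality of cardinalities and the coset claim. The one point to watch is that the argument in (a) produces a priori the subgroup $K_i$ attached to the left index $i$; the content of the lemma is that cyclicity forces all these subgroups to coincide, and without the abelian hypothesis the sets $g_i^{-1}Hg_j\cap S$ need not be cosets at all.
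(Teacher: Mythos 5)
Your proof is correct. It splits the lemma into the same two sub-claims the paper handles, but resolves the key one by a different mechanism. Your step (a) --- that $g_i^{-1}Hg_j\cap S$ is a right coset of $K_i=g_i^{-1}Hg_i\cap S$ --- is essentially what the paper does implicitly at the end of its proof. The real divergence is in identifying the diagonal subgroups $K_i$ with $K_{k_0}$. The paper first proves equality of all the cardinalities $|g_i^{-1}Hg_j\cap S|$ by a counting argument (right multiplication by a fixed element of one intersection injects one set into another, and symmetry of the indices turns the inequalities into equalities), and then invokes the fact that a cyclic group has at most one subgroup of each order to conclude $K_i=K_{k_0}$. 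You instead conjugate: for $a\in g_i^{-1}Hg_{k_0}\cap S$ and $k\in K_{k_0}$ you observe $aka^{-1}\in g_i^{-1}Hg_i$ while $aka^{-1}=k$ because $S$ is abelian, giving $K_{k_0}\subseteq K_i$ directly. Your route is arguably cleaner and needs only that $S$ is abelian for this step (the paper's unique-subgroup-of-each-order argument genuinely needs $S$ cyclic), and it yields the cardinality statement as a corollary of the coset statement rather than as a separate preliminary. One cosmetic quibble: your closing remark that without commutativity the sets ``need not be cosets at all'' overstates slightly --- step (a) shows they are always right cosets of $K_i$; what can fail is only that $K_i$ coincides with the fixed subgroup $K_{k_0}$ named in the lemma.
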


\begin{proof}
Take $z \in g^{-1}_{i}Hg_{j} \cap S$. For different elements $q
\in g^{-1}_{j}Hg_{k_{0}} \cap S$, we obtain different elements $zq
\in g^{-1}_{i}Hg_{k_{0}} \cap S$ and hence $|g^{-1}_{i}Hg_{k_{0}}
\cap S| \geq |g^{-1}_{j}Hg_{k_{0}} \cap S|$. On the other hand,
taking inverses we see that $|g^{-1}_{j}Hg_{k_0} \cap S|=
|g^{-1}_{k_0}Hg_j \cap S|$. Being $i$, $j$ and $k_{0}$ arbitrary
the first part of the lemma follows. For the second part, note
that $g^{-1}_{i}Hg_{i} \cap S$ and $g^{-1}_{k_0}Hg_{k_{0}} \cap S$
are subgroups of the cyclic group $S$. By the first part of the
proof, they have the same order and hence they coincide. Following
the first part of the proof we see that $g^{-1}_iHg_j \cap S$ is a
(right, and hence $2$-sided (by commutativity)) $g^{-1}_iHg_i \cap
S$-coset and the lemma is proved.

\end{proof}

Now, observe that for $i=1,\ldots,l$, the algebra
$$
U_{i}=\mbox{span} \{\pi_{h} \otimes e_{u,v} \in A_{S}\mid u,v \in \Omega_{i}\}
$$
is a direct
summand of $A_{S}$ and so, the proof of the proposition will be
completed if we show that $U_{i}$, $i=1,\ldots,l$, is $S$-simple. To
see this we exhibit an explicit presentation of $U_i$ as a
$S$-simple algebra.

Fix $1 \leq k \leq l$ and $k_{0} \in \Omega_{k}$. Let
$w_{i-n_{(k-1)}} \in S$ be a $g^{-1}_{k_0}Hg_{k_{0}} \cap S$-coset
representative of $g^{-1}_{k_0}Hg_{i} \cap S$, where $p_1 + \cdots +
p_{k-1}+1\leq i \leq p_1 + \cdots + p_{k}$, and $n_{(k-1)}=p_1 +
\cdots + p_{k-1}$. Then the map
$$
\phi_{k}: \pi_{h}\otimes e_{i,j} \longmapsto w_{i-n_{(k-1)}}g^{-1}_ihg_jw^{-1}_{j-n_{(k-1)}} \otimes e_{i-n_{(k-1)}, j-n_{(k-1)}}
$$
determines an isomorphism of the $S$-graded algebra $U_{k}$ with
$F(g^{-1}_{k_0}Hg_{k_0}\cap S) \otimes M_{p_{k}}(F)$. In the latter,
the elementary grading is given by the $p_{k}$-tuple
$(w_1,\ldots,w_{p_k})$. Details are omitted.
Finally we note that Proposition \ref{pro15} (b) follows
from Lemma \ref{lem17} (by ordering the coset's elements) and the proof of
Proposition\ref{pro15} is completed.

\begin{remark}
Note that since the group $g^{-1}_{k_0}Hg_{k_0}\cap S$ is cyclic,
its cohomology vanishes and hence we may use group
elements rather then representatives.
\end{remark}

\section{Graded exponent}

Throughout $F$ will be an algebraically closed field of characteristic zero
and $A$ a $G$-graded PI-algebra over $F$ with  $G=\{g_1=1,g_2, \ldots, g_s\}$
a finite group.

In this section we shall prove that the $G$-exponent of $A$ exists and is an integer.
This result was proved in case $G$ is an abelian group in
\cite{AGL} for finitely generated algebras and in \cite{GL} in general.
Here we shall follow closely that approach.

We start by recalling the general setting.
The ideal of $G$-graded polynomial identities of $A$ is denoted $\mbox{Id}^G(A)$.
For every $n\geq 1$,
$$
P_n^G=\mbox{span}_F\{x_{\sigma(1),g_{i_{\sigma(1)}}}\cdots x_{\sigma(n),
g_{i_{\sigma(n)}}}\ | \ \sigma \in S_n, g_{i_1}, \ldots, g_{i_n}\in G \}
$$
is the space of multilinear $G$-graded  polynomials in the homogeneous
variables $x_{1,g_{i_1}}, \ldots, x_{n,g_{i_n}},$ $g_{i_j}\in G.$
We construct the quotient
space
$$
P_n^G(A)= \frac{P^G_n}{P^G_n\cap \I^G(A)}
$$
and the non-negative
integer
$$
c^G_n(A)=\dim_FP_n^G(A), \ n\geq 1,
$$
is the $n$th $G$-graded codimension of $A$.
Our aim is to prove that $exp^G(A)= \lim_{n\to \infty}\sqrt[n]{c_n^G(A)}$ exists and is an integer.
Moreover we shall relate such an integer to the dimension of a finite
dimensional semisimple algebra related to $A$.

For every $n\ge 1$, write $n=n_1+\cdots+n_s$ a sum of non-negative
integers and let $P_{n_1, \ldots, n_s}\subseteq P^G_n$ be the
space of multilinear graded polynomials in which the first $n_1$
variables have homogeneous degree $g_1,$ the next $n_2$ variables
have homogeneous degree $g_2$ and so on. Then $P_n^G$ is
the direct sum of subspaces isomorphic to $P_{n_1, \ldots, n_s}$,
for every choice of the integers $n_1, \ldots, n_s$. Moreover such
decomposition is inherited by $P_{n_1, \ldots, n_s}\cap \I^G(A)$ and we define
$$
P_{n_1, \ldots, n_s}(A)=\frac{P_{n_1, \ldots, n_s}}{P_{n_1, \ldots,
n_s}\cap \I^G(A)}.
$$
If we let
$$
c_{n_1,\ldots, n_s}(A)=\dim P_{n_1, \ldots, n_s}(A)
$$
then, by checking dimensions we have
\begin{equation}\label{coG}
c_n^G(A)=\sum_{n_1+\cdots+n_s=n}\binom{n}{n_1, \ldots,
n_s}c_{n_1,\ldots, n_s}(A),
\end{equation}
where $\displaystyle\binom{n}{n_1, \ldots, n_s}=\frac{n!}{n_1!\cdots
n_s!}$\  denotes the multinomial coefficient.  In order to
compute an upper and a lower bound for $c_n^G(A)$, it is enough to
do so for $c_{n_1,\ldots, n_s}(A)$ and then apply (\ref{coG}).

The space $P_{n_1, \ldots, n_s}(A)$ is naturally endowed with a
structure of $S_{n_1}\times \cdots \times S_{n_s}$-module in the
following way. The group $S_{n_1}\times \cdots \times S_{n_s}$ acts
on the left on $P_{n_1, \ldots, n_s}$ by permuting the variables of
the same homogeneous degree; hence $S_{n_1}$ permutes the variables
of homogeneous degree $g_1$,  $S_{n_2}$  those of homogeneous degree
$g_2$, etc.. Since $\I^G(A)$ is invariant under this action,
$P_{n_1, \ldots, n_s}(A)$ inherits a structure of $S_{n_1}\times
\cdots \times S_{n_s}$-module and we denote by $\chi_{n_1,\ldots ,n_s}(A)$ its character.

If $\lambda$ is a partition of $n$, we write $\lambda\vdash n$.
It is well-known that there is a one-to-one correspondence between partitions
of $n$ and irreducible $S_n$-characters. Hence if $\lambda\vdash n$,
we denote by $\chi_\lambda$ the corresponding irreducible $S_n$-character.
Now, if $\lambda(1)\vdash n_1 ,\ldots, \lambda(s)\vdash n_s,$
are partitions, then we write $\langle \lambda
\rangle=(\lambda(1), \ldots, \lambda(s))\vdash (n_1, \ldots, n_s)$ and we say
that  $\langle \lambda
\rangle$ is a multipartition of $n=n_1+\cdots +n_s$.

Since $\mbox{char}\, F=0$, by complete reducibility
$\chi_{n_1,\ldots ,n_s}(A)$  can be written as a sum of irreducible
characters and let
\begin{equation}\label{multiplo}
\chi_{n_1,\ldots ,n_s}(A)=\sum_{\langle \lambda\rangle\vdash
n}m_{\langle \lambda\rangle}\chi_{\lambda(1)}\otimes\cdots \otimes
\chi_{\lambda(s)},
\end{equation}
where $\langle \lambda \rangle=(\lambda(1), \ldots,
\lambda(s))\vdash (n_1, \ldots, n_s)$ is a multipartition of
$n=n_1+\cdots+n_s$ and $m_{\langle \lambda\rangle}\ge 0$ is the
multiplicity of $\chi_{\lambda(1)}\otimes\cdots \otimes
\chi_{\lambda(s)}$ in $\chi_{n_1,\ldots, n_s}(A)$.

Our first objective is to prove that the multiplicities in (\ref{multiplo})
are polynomially bounded.

Let $E=\langle e_1, e_2, \ldots \mid e_ie_j=-e_je_i \rangle$ be the
infinite dimensional Grassmann algebra over $F$ and let $E=E_0\oplus
E_1$ be its standard $\mathbb{Z}_2$-grading. 
Now, if $B=\oplus_{(g, i)\in
G\times \mathbb{Z}_2}B_{(g,i)}$ is a $G\times \mathbb{Z}_2$-graded algebra, then
$B$ has an induced $\mathbb{Z}_2$-grading, $B=B_0\oplus B_1$, where $B_0= \oplus _{g\in
G}B_{(g,0)}$ and $B_1= \oplus _{g\in
G}B_{(g,1)},$ and an induced $G$-grading $B=\oplus_{g\in G}B_g$ where, for all  $g\in G,$ $B_g=B_{(g, 0)}\oplus B_{(g, 1)}.$
Then one defines the Grassmann envelope of $B$ as $E(B)=(B_0\otimes E_0) \oplus (B_1\otimes E_1)$.
Clearly $E(B)$ has a  $G$-grading given by $E(B)=\oplus_{g\in G}E(B)_g$, where $E(B)_g=
(B_{(g,0)}\otimes E_0)\oplus (B_{(g,1)}\otimes E_1)$.

As in the ordinary case, the Grassmann envelope is an important object. In fact
by a result of Aljadeff and Belov (\cite{AB}),
any variety of $G$-graded PI-algebras can be generated by the
Grassmann envelope of a suitable finite dimensional $G\times
\mathbb{Z}_2$-graded algebra.

Let $\mathcal{V}= \mbox{var}^G(A)$ denote the variety of $G$-graded algebras
generated by $A$ and define $\mathcal{V}^*=\{ B= G\times
\mathbb{Z}_2\mbox{-graded algebra such that} \ E(B)\in \mathcal{V}\}$.
Then $\mathcal{V}^*$ is a variety (see \cite[Theorem 3.7.5]{GZbook}).

Now, according to \cite[Corollary 1]{GL}, there exist integers $k\ge l\ge 0$ such that
in (\ref{multiplo}) $\lambda(1), \ldots, \lambda(s) \in H(k,l)$, where
$H(k,l)=\{\lambda\vdash n \mid \lambda_{k+1}\le l\}$ is
the infinite $k\times l$ hook.

Let $L$ be the relatively
free $G\times \mathbb{Z}_2$-graded algebra of $\mathcal{V}^*$ on the $(k+l)s$ graded generators
\begin{equation}\label{run}
u_{(g_i,0)}^j, \ v_{(g_i,1)}^p, \quad 1\le i\le s, 1\le j\le k, 1\le p\le l.
\end{equation}
Then $\mathcal{V}= \mbox{var}^G(E(L))$ (see for instance  \cite[Theorem 4.8.2]{GZbook}).

Since $L$ is a finitely generated $G\times \mathbb{Z}_2$-graded PI-algebra, by
\cite[Theorem 1.1]{AB} there exists a finite dimensional $G\times \mathbb{Z}_2$-graded algebra $W$
such that $\mbox{var}^{G\times \mathbb{Z}_2}(L)= \mbox{var}^{G\times \mathbb{Z}_2}(W)$.
Moreover $L$ is a homomorphic image of a relatively free graded algebra $T$
of such variety on $ks$ even generators and $ls$ odd generators.

The algebra $T$ can be constructed by ``generic" elements as follows: fix a basis
$\{a_1\ldots, a_m\}$ of $W$ of homogeneous elements. Let $\xi_i^{(t)}$, \
$1\le i\le m, 1\le t\le (k+l)s$ be commutative variables and define
$\xi_{(g_i,0)}^{(t)} = \sum a_{i_j}\otimes \xi_{i_j}^{(t)}$, \ $1\le t\le k$,
where the sum runs over all $i_j$ such that $a_{i_j}$ is of homogeneous degree
$(g_i,0)$.
Similarly define  $\xi_{(g_i,1)}^{(r)} = \sum a_{i_j}\otimes \xi_{i_j}^{(r)}$, \ $1\le r\le l$,
where the $a_{i_j}$ are of homogeneous degree $(g_i,1)$.
Then $T$ is generated by the ``generic" elements
\begin{equation}\label{run1}
\xi_{(g_i,0)}^{(t)}, \xi_{(g_i,1)}^{(r)},
\ 1\le i\le s, 1\le t\le k, 1\le r\le l.
\end{equation}
Denote by $L_n$ the subspace of $L$ spanned by all products $w_1\cdots w_i$, $1\le i\le n$, where
$w_1, \ldots, w_i$ run over the generators given in (\ref{run}).
Define $T_n$ accordingly on the relatively free generators given in (\ref{run1}).
Since $L$ is a homomorphic image of $T$, $\dim L_n\le \dim T_n$ and we compute an upper bound of
this last dimension.

Notice that every monomial of degree at most $n$ in the generic elements in (\ref{run1}),
belongs to $W\otimes F[\xi_i^{(t)}]_n$, where $F[\xi_i^{(t)}]_n$ is the subspace of
polynomials of degree at most $n$ in the commutative variables $\xi_i^{(t)},$ \
$1\le i\le m, 1\le t\le (k+l)s$.

Since $\dim F[\xi_i^{(t)}]_n = \binom{(k+l)sm+n}{n}\le (n+(k+l)s)^{(k+l)sm}$,
we get that
\begin{equation}\label{run2}
\dim L_n\le \dim T_n \le m(n+(k+l)s)^{(k+l)sm}\le Cn^t,
\end{equation}
for some constants $C,t$.

We are now ready to prove the following.

\begin{lemma} \label{rem1}
There exist constants $C,t$ such that for all $n\ge 1$, $m_{\langle
\lambda\rangle}\le Cn^t$ in (\ref{multiplo}).
\end{lemma}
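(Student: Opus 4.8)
The plan is to bound each individual multiplicity $m_{\langle\lambda\rangle}$ by $\dim L_n$ and then feed this into the polynomial estimate (\ref{run2}). The starting point is that $\mathcal{V}=\mbox{var}^G(A)=\mbox{var}^G(E(L))$, so $A$ and $E(L)$ have the same $G$-graded cocharacters; in particular the multiplicities in (\ref{multiplo}) are unchanged if computed for $E(L)$ in place of $A$, and by \cite[Corollary 1]{GL} every constituent satisfies $\lambda(i)\in H(k,l)$ for the fixed integers $k\ge l\ge 0$. It therefore suffices to construct, for each fixed multipartition $\langle\lambda\rangle\vdash n$, an injective linear map from the $m_{\langle\lambda\rangle}$-dimensional multiplicity space into $L_n$; combined with $\dim L_n\le Cn^t$ this gives the lemma at once.

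To build such a map I would use highest weight vectors. After fixing a standard multitableau of shape $\langle\lambda\rangle$, the multiplicity $m_{\langle\lambda\rangle}$ equals the maximal number of associated highest weight vectors that are linearly independent modulo $\I^{G}(E(L))$. Each such vector is a multilinear graded polynomial that is symmetrized along the rows and alternated along the columns of every $\lambda(i)$. The essential point is that, since $\lambda(i)\in H(k,l)$, in each homogeneous degree $g_i$ only $k$ distinct symmetrized slots and $l$ distinct alternated slots are ever needed. I would then specialize into the generic generators of $L$: a variable sitting in row $j\le k$ of the $g_i$-tableau is sent to the even generator $u^{j}_{(g_i,0)}$, and the variables in the alternated part are sent to the odd generators $v^{p}_{(g_i,1)}$, $1\le p\le l$. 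Passing to the Grassmann envelope, the even generators commute through $E_0$ and so realize the row-symmetrization, while the odd generators anticommute through $E_1$ and so realize the column-alternation; the specialization is therefore compatible with the symmetry type of the highest weight vector, and since the vector has degree $n$ its value is a sum of length-$n$ products of generators, i.e. an element of $L_n$.

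The heart of the matter, and the step I expect to be the main obstacle, is injectivity: one must verify that highest weight vectors which are linearly independent modulo $\I^{G}(E(L))$ stay linearly independent after this specialization into the finitely many generic generators. This is exactly where the deliberate matching between the hook parameters $(k,l)$ and the prescribed number of even and odd generators of $L$ is used. Because every $\lambda(i)$ lies in $H(k,l)$, a highest weight vector for $\langle\lambda\rangle$ calls, in each degree $g_i$, on at most $k$ symmetrized and $l$ alternated arguments, so it is a graded identity of $E(L)$ \emph{precisely when} its specialization into the $(k+l)s$ generic generators vanishes in $L$; no substitution outside this finite family can detect it. Hence the kernel of the specialization map is exactly the subspace of highest weight vectors lying in $\I^{G}(E(L))$, and the map descends to an injection of the multiplicity space into $L_n$. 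Granting this, $m_{\langle\lambda\rangle}\le\dim L_n\le Cn^t$ for every $\langle\lambda\rangle\vdash n$, which is the assertion of the lemma.
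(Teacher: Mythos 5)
Your proposal follows essentially the same route as the paper: bound $m_{\langle\lambda\rangle}$ by $\dim L_n$ by specializing highest weight vectors (symmetrized along rows, alternated along columns, with shapes confined to $H(k,l)$) into the $(k+l)s$ generic generators of $L$, and then invoke the polynomial bound (\ref{run2}). The paper phrases this contrapositively --- assuming $m_{\langle\lambda\rangle}>\dim L_n$ and producing a nonzero linear combination $h=\sum\alpha_ih_i$ that is forced into $\mbox{Id}^G(\mathcal{V})$ --- but the content is your injectivity claim.

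The step you flag as ``the main obstacle'' is indeed the only nontrivial point, and your justification of it (``no substitution outside this finite family can detect it'') is an assertion of the needed statement rather than a proof; two things deserve attention. First, you cannot identify the variables inside an alternated set and substitute them by a single generator: an alternating multilinear polynomial vanishes identically under such a substitution. One must first apply the Berele--Regev translation $\tilde{\ }$, which converts the alternated $Z$-sets into symmetric sets of odd variables, and only then identify variables within each set to form $\tilde h_i^\circ$ and evaluate into $L$. Second, the direction ``specialization vanishes in $L$ $\Rightarrow$ $h$ is an identity of $E(L)$'' is not automatic: vanishing on the free generators gives $\tilde h^\circ\in\mbox{Id}^{G\times\mathbb{Z}_2}(\mathcal{V}^*)$ because $L$ is relatively free for $\mathcal{V}^*$ on exactly those generators, but one must then return from $\mathcal{V}^*$ to $\mathcal{V}$. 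The paper does this by exhibiting the auxiliary algebra $Q=(S\otimes E_0)\oplus(S\otimes E_1)\in\mathcal{V}^*$, where $S$ is the countable-rank relatively free algebra of $\mathcal{V}$, and choosing an explicit Grassmann evaluation for which $\varphi(\tilde h^\circ)=\varphi'(\tilde{\tilde h})\otimes\gamma$ with $\gamma\neq0$; since $\tilde{\tilde h}=h$, this forces $\varphi'(h)=0$ on the free generators of $S$ and hence $h\in\mbox{Id}^G(\mathcal{V})$. Your sketch contains the right ingredients (the hook condition matching the number of even and odd generators, the Grassmann sign bookkeeping), but the equivalence at its core requires this construction; as written it is a gap rather than a proof.
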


\begin{proof}
Suppose that there exists $n$ and $\langle \lambda\rangle\vdash n$ such that
$m_{\langle \lambda\rangle}> Cn^t \ge \dim L_n$, where $C$ and $t$ are defined in (\ref{run2}).
Hence there exist $m_{\langle \lambda\rangle}=r$ irreducible $S_{n_1}\times\cdots\times
S_{n_s}$-modules $M_1, \ldots, M_r\in P_{n_1, \ldots, n_s}$ with character
$\chi_{\lambda(1)}\otimes\cdots \otimes\chi_{\lambda(s)}$ and
$(M_1 \oplus \cdots \oplus M_r)\cap \mbox{Id}^G(\mathcal{V})=0$.
Now, as in \cite[Lemma 4]{GL}, we may take $M_i=F(S_{n_1}\times\cdots \times S_{n_s})h_i$
where, by eventually adding some empty sets of variables, we may assume that each $h_i$
is a polynomial in the homogeneous sets of variables
$Y_{g_1}^j\ldots, Y_{g_s}^j, Z_{g_1}^p, \ldots, Z_{g_s}^p$, \ $i\le j\le k, 1\le p\le l$, and
$h_i$ is symmetric in each $Y_{g_t}^j$ and alternating in each $Z_{g_t}^p$.
Since $(M_1 \oplus \cdots \oplus M_r)\cap \mbox{Id}^G(\mathcal{V})=0$, for every choice of
$\alpha_1,\ldots, \alpha_r\in F$ not all zero, we have that
$h=\sum_{i=1}^r \alpha_ih_i \not\in \mbox{Id}^G(\mathcal{V})$.

Let $\tilde{}$ be the map defined in \cite[Section 5]{GL}.
Then, if we regard the variables of each set $Y_{g_t}^j$ as even variables and those of $Z_{g_t}^p$
as odd variables, we can construct the polynomials $\tilde h_i, \ 1\le i\le r$.
Then $\tilde h_i$ is symmetric on each $Y_{g_t}^j$ and on each $Z_{g_t}^p$.

For every $i$ and $j$, let $Y_{g_i}^j=\{y_{1,g_i}^j, \ldots, y_{m_i,g_i}^j\}$
and $Z_{g_i}^j=\{z_{1,g_i}^j, \ldots, z_{r_i,g_i}^j\}$.
Define $S$ to be the relatively free $G$-graded algebra of the variety $\mathcal{V}$ on the graded
generators
$$
\bar y_{p, g_i}^j, \ \bar z_{q,g_i}^j,  \quad 1\le i\le s, 1\le p\le m_i, 1\le q\le r_i, \ j=1,2,\ldots.
$$
Then the algebra $Q=(S\otimes E_0)\oplus (S\otimes E_1)$ has a natural
$G\times \mathbb{Z}_2$-grading and we can take its Grassmann envelope
$$
E(Q)=(S\otimes E_0\otimes E_0)\oplus (S\otimes E_1\otimes E_1)
\subseteq S\otimes (E_0\otimes E_0\otimes E_1\otimes E_1).
$$
Since $E_0\otimes E_0\otimes E_1\otimes E_1$ is commutative, $E(Q)$ and $S$ satisfy
the same $G$-graded identities. Hence $E(Q)\in \mathcal{V}$ and, so, $Q\in \mathcal{V}^*$.

Now in each polynomial $\tilde h_i$, $1\le i\le r$, we identify the variables in each set
$Y_{g_i}^j$  and in each set $Z_{g_i}^j$
and we let $\tilde h_i^\circ$ be the corresponding polynomial.
Since $\deg \tilde h_i=n$, under the evaluation
$$
\varphi(Y_{g_i}^j)= u_{(g_i,0)}^j,\  \varphi(Z_{g_i}^p)= v_{(g_i,1)}^p,
\quad 1\le i\le s, 1\le j\le k, 1\le p\le l,
$$
we have that $\varphi(\tilde h_i^\circ)\in L_n$, for all $1\le i\le r$.

Since by hypothesis  $r> \dim L_n$, there exist scalars $\alpha_1, \ldots, \alpha_r$
not all zero, such that $\varphi(\sum_{i=1}^r \alpha_i\tilde h_i^\circ)=0$ in $L$.
Recalling that $L$ is a relatively free algebra of $\mathcal{V}^*$, we obtain that
$\tilde h^\circ =\sum_{i=1}^r \alpha_i\tilde h_i^\circ\in
\mbox{Id}^{G\times \mathbb{Z}_2}(\mathcal{V}^*)$.
Now, $Q\in \mathcal{V}^*$ and, so,  $\tilde h^\circ \in \mbox{Id}^{G\times \mathbb{Z}_2}(Q)$.

If we consider the evaluation in $Q=(S\otimes E_0)\oplus (S\otimes E_1)$ defined by
$$
\varphi(Y_{g_i}^j)= \bar y_{1, g_i}^j\otimes \alpha_1^j +\cdots +
\bar y_{m_i, g_i}^j\otimes \alpha_{m_i}^j, \quad 1\le i\le s, 1\le j\le k,
$$
$$
\varphi(Z_{g_i}^{j'})=
\bar z_{1, g_i}^{j'}\otimes \beta_1^{j'}+\cdots + \bar z_{r_i, g_i}^{j'}\otimes \beta_{r_i}^{j'},
\quad 1\le i\le s, 1\le {j'}\le l,
$$
where $\alpha_t^j, \beta_t^{j'}$ are disjoint monomials of $E$ of the correct homogeneous degree
($\alpha_t^j$ of homogeneous degree $0$ and  $\beta_t^{j'}$ of homogeneous degree $1$),
 we get that $\varphi(\tilde h^\circ)=0$.
By computing explicitly we obtain  $0=\varphi(\tilde h^\circ)=\varphi'(\tilde{\tilde h})\otimes \gamma$
where $0\ne \gamma \in E$ and $\varphi'$ is an evaluation such that
$$
\varphi'(y_{p,g_i}^j)= \bar y_{p,g_i}^j,
\varphi'(z_{q,g_i}^{j'})= \bar z_{q,g_i}^{j'},
$$
with $1\le p\le m_i, 1\le q\le r_i, 1\le j\le k,
1\le {j'}\le l,
1\le i\le s.$
Since $\tilde{\tilde h}=h$, we obtain that $\varphi'(h)=\varphi'(\tilde{\tilde h})=0$.

Now, recall that the elements   $\bar y_{p, g_i}^j, \ \bar z_{q,g_i}^{j},$  $j\ge 1$,
generate the relatively free $G$-graded algebra of $\mathcal{V}$ of
countable rank. Hence  $\varphi'(h)=0$ says that $h= \sum_{i=1}^r \alpha_i h_i
\in \mbox{Id}^G(\mathcal{V})$, and this contradiction completes the proof.
\end{proof}

Next we shall prove the existence of the $G$-exponent of $A$.
Let $B$ be a finite dimensional $G\times
\mathbb{Z}_2$-graded algebra such that $\mbox{var}^G(A)=\mbox{var}^G(E(B))$.
 By the Wedderburn-Malcev theorem \cite{CR} and
the result in \cite{taft}, we can write $B=B_1\oplus \cdots \oplus B_k+J$, where
$B_1, \ldots, B_k$ are $G\times
\mathbb{Z}_2$-graded simple algebras and $J$ is the Jacobson radical of $B$.
Recall that according to \cite[Section 3]{GL}, a semisimple subalgebra
$D=D_1\oplus\cdots\oplus D_h$,
where $D_1, \ldots, D_h\in \{B_1, \ldots, B_k\}$ are distinct, is
admissible if $D_1JD_2J\cdots JD_h\ne 0$.
Then one defines
\begin{equation} \label{d}
p=p(B)= \mbox{max}\, (\dim D)
\end{equation}
 where $D$ runs over all
admissible subalgebras of $B$.

We shall prove that $p$ coincides with  the $G$-exponent of $A$.
In fact we have.

\begin{theorem}\label{main} \label{mainth} Let $B$ be a
finite dimensional $G\times \mathbb{Z}_2$-graded algebra over an algebraically closed field of characteristic zero. Then there
exist constants $C_1>0, C_2, k_1,k_2$ such that
$$
C_1n^{k_1}p^n \le c^G_n(E(B)) \le C_2n^{k_2}p^n,
$$
where $p=p(B)$ is the integer defined in (\ref{d}).
\end{theorem}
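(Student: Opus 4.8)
The plan is to prove the two inequalities separately, both with exponential base $p=p(B)$, and then read off that $\lim_{n\to\infty}\sqrt[n]{c_n^G(E(B))}$ exists and equals the integer $p$. Throughout I write $A=E(B)$, fix the Wedderburn--Malcev decomposition $B=B_1\oplus\cdots\oplus B_k+J$ into $G\times\mathbb{Z}_2$-graded simple components and radical $J$, and let $N$ be the nilpotency index of $J$. The whole argument follows the scheme of \cite{GL}; the point is that the two places where commutativity of $G$ was used there are now supplied by Lemma \ref{rem1} and Theorem \ref{theo1}.

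For the upper bound I would start from (\ref{coG}) and (\ref{multiplo}), so that
\[
c_n^G(A)=\sum_{n_1+\cdots+n_s=n}\binom{n}{n_1,\ldots,n_s}\sum_{\langle\lambda\rangle\vdash n}m_{\langle\lambda\rangle}\prod_{i=1}^{s}\deg\chi_{\lambda(i)}.
\]
By \cite[Corollary 1]{GL} only multipartitions with every $\lambda(i)$ in a fixed hook $H(k,l)$ occur, so the number of contributing $\langle\lambda\rangle$ is polynomially bounded, and by Lemma \ref{rem1} each $m_{\langle\lambda\rangle}$ is at most $Cn^{t}$. It therefore suffices to bound a single term $\binom{n}{n_1,\ldots,n_s}\prod_i\deg\chi_{\lambda(i)}$ by $\mathrm{poly}(n)\,p^{n}$. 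Here I would invoke the structural mechanism of \cite{GL} and \cite{GZbook}: a nonzero multiplicity produces a multialternating non-identity whose alternating sets must be filled by homogeneous elements lying in a product $D_{i_1}JD_{i_2}\cdots JD_{i_h}$ along an admissible chain, and maximality in (\ref{d}) caps the relevant dimension at $p$. Combined with the hook estimate for $\deg\chi_{\lambda}$ and the multinomial identity
\[
\sum_{n_1+\cdots+n_s=n}\binom{n}{n_1,\ldots,n_s}\prod_{i=1}^{s}a_i^{\,n_i}=(a_1+\cdots+a_s)^{n},
\]
with $a_i=\dim D_{g_i}$ and $\sum_i a_i=\dim D\le p$, this yields $c_n^G(A)\le C_2 n^{k_2}p^{n}$.

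For the lower bound I would use Theorem \ref{theo1}, applied to the group $G\times\mathbb{Z}_2$. Choose an admissible subalgebra $D=D_1\oplus\cdots\oplus D_h$ with $\dim D=p$ and $D_1JD_2J\cdots JD_h\neq 0$. Each $D_j$ is $G\times\mathbb{Z}_2$-graded simple, so Theorem \ref{theo1} furnishes, for every $t$, a multialternating $(G\times\mathbb{Z}_2)$-graded polynomial with at least $2t$ alternating sets per homogeneous degree, of bounded extra width, evaluating on $D_j$ to an identity matrix unit. Bridging these by fixed radical elements $w_1,\ldots,w_{h-1}$ realizing $D_1JD_2\cdots JD_h\neq 0$, and then alternating across the components the variables of equal $(G\times\mathbb{Z}_2)$-degree, I obtain a highly multialternating $(G\times\mathbb{Z}_2)$-graded non-identity of $B$. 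Transferring it through the tilde construction of \cite[Section 5]{GL} and the algebra $Q=(S\otimes E_0)\oplus(S\otimes E_1)$, exactly as in the proof of Lemma \ref{rem1}, converts super-alternation into genuine alternation and produces $G$-graded non-identities of $A=E(B)$. Counting the $S_{n_1}\times\cdots\times S_{n_s}$-module they generate---each $G$-degree $g$ now carrying essentially rectangular shapes of height $\dim D_g=\dim D_{(g,0)}+\dim D_{(g,1)}$---gives a nonzero multiplicity on a multipartition whose associated dimension is at least $C_1 n^{k_1}\big(\sum_g\dim D_g\big)^{n}=C_1 n^{k_1}p^{n}$, again by the multinomial identity.

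I expect the crux to be the lower bound: assembling the local polynomials of Theorem \ref{theo1} on the several $G\times\mathbb{Z}_2$-simple blocks into one global non-identity that both survives the passage to the Grassmann envelope and retains enough simultaneous alternations to force the full rectangular shapes of height $\dim D_g$. The delicate points are that the radical bridges $w_j$ must be chosen compatibly with the homogeneous degrees so that the product stays nonzero, and that the interaction of the even/odd $\mathbb{Z}_2$-parts with the Grassmann variables must send alternation in even variables and symmetry in odd variables to alternation after tensoring---precisely the mechanism underlying the tilde construction. Once the non-identity is in place, the identification of the base with $p$ in both bounds is the standard multinomial-plus-hook asymptotic carried out as in \cite{GZbook} and \cite{GL}, and the two inequalities together give $\lim_{n\to\infty}\sqrt[n]{c_n^G(E(B))}=p\in\mathbb{Z}$.
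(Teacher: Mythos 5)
Your proposal follows essentially the same route as the paper: the paper's proof is precisely the observation that the argument of \cite{GL} for abelian $G$ carries over verbatim once one substitutes Lemma \ref{rem1} for the multiplicity bound in the upper estimate and Theorem \ref{theo1} (applied to the $G\times\mathbb{Z}_2$-graded simple components of an admissible subalgebra, with the alternating sets bridged through the radical and transferred via the Grassmann envelope) for the multialternating polynomials in the lower estimate. Your expanded outline of both bounds matches that scheme, including the correct identification of the two non-abelian ingredients.
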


\begin{proof}
This theorem is proved in \cite{GL} for $G$ abelian but the proof carries over to the non abelian case
by making the following changes.

In the computation of the upper bound $c^G_n(E(B)) \le C_2n^{k_2}p^n$ one should use Lemma
\ref{rem1} above instead of \cite[Remark 1]{GL}.

For the computation of the lower bound $C_1n^{k_1}p^n \le c^G_n(E(B))$ one should use
Theorem \ref{theo1} instead of \cite[Lemma 18]{AGL}, concerning the construction of
multialternating polynomials of arbitrary large degree for finite dimensional $G$-graded simple algebras.
We should point out that while the polynomial constructed in \cite[Lemma 18]{AGL} depends on a parameter $t$,
the one constructed in Theorem \ref{theo1} depends on $s$ parameters $t_1, \ldots, t_s$
(each corresponding to a homogeneous component of the algebra) and these parameters are
squeezed between $2t$ and $2|G|t$. Then one notices
that in \cite[Section 6]{GL} the proofs are carried over for each homogeneous component separately.
This completes the proof of the theorem.
\end{proof}

Since graded codimensions do not change by extension of the base field, we get
the following.

 \begin{theorem}
Let $G$ be a finite group and $A$ a $G$-graded PI-algebra over any field $F$ of characteristic zero.
 Then $exp^G(A)=\lim_{n\to \infty}\sqrt[n]{c_n^G(A)}$ exists and is an integer.
 \end{theorem}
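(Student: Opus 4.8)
The plan is to obtain the statement as a formal consequence of Theorem~\ref{mainth}, after two reductions that bring an arbitrary $G$-graded PI-algebra into the shape $E(B)$ for a finite dimensional $G\times\mathbb{Z}_2$-graded algebra $B$, and then a one-line limit computation.

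First I would reduce to the case of an algebraically closed field, as flagged by the sentence preceding the statement. If $\bar F$ denotes the algebraic closure of $F$ and $\bar A = A\otimes_F \bar F$, then a multilinear graded polynomial lies in $\I^G(A)$ if and only if it lies in $\I^G(\bar A)$, so that the spaces $P_n^G(A)$ and $P_n^G(\bar A)$ have the same dimension; hence $c_n^G(A)=c_n^G(\bar A)$ for every $n$ (see \cite{GZbook}). Since the asserted limit depends only on the integer sequence $c_n^G(A)$, it suffices to prove the theorem over $\bar F$, and I would henceforth assume $F$ algebraically closed, matching the standing hypotheses of Section~2. Next I would invoke the representability theorem of Aljadeff and Belov \cite{AB}: the variety $\V^G(A)$ is generated by the Grassmann envelope $E(B)$ of a suitable finite dimensional $G\times\mathbb{Z}_2$-graded algebra $B$. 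Consequently $\I^G(A)=\I^G(E(B))$ and therefore $c_n^G(A)=c_n^G(E(B))$ for all $n$.

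At this point Theorem~\ref{mainth} applies verbatim to $E(B)$ and produces constants $C_1>0,C_2,k_1,k_2$ with
$$
C_1 n^{k_1} p^n \le c_n^G(A)=c_n^G(E(B)) \le C_2 n^{k_2} p^n,
$$
where $p=p(B)$ is the integer defined in (\ref{d}). The decisive feature here is that the \emph{same} exponential base $p^n$ appears in both the upper and the lower bound, the polynomial factors $n^{k_i}$ differing only in the (irrelevant) exponents. Taking $n$th roots and using that $\sqrt[n]{C}\to 1$ for any constant $C>0$ and $\sqrt[n]{n^{k}}=n^{k/n}\to 1$ for any fixed $k$, both outer terms satisfy $\sqrt[n]{C_i n^{k_i} p^n}\to p$. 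By the squeeze principle, $\lim_{n\to\infty}\sqrt[n]{c_n^G(A)}$ exists and equals $p$, which is an integer; this is exactly the claim.

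I do not expect a genuine obstacle in this concluding theorem itself: all the analytic content has already been isolated in Theorem~\ref{mainth}, whose passage to the non-abelian case rests in turn on the new Lemma~\ref{rem1} (the polynomial bound on the multiplicities $m_{\langle\lambda\rangle}$) and on Theorem~\ref{theo1} (the construction of multialternating graded polynomials on $G$-simple algebras). The only two points that require care are the reductions at the front end — verifying that graded codimensions are truly invariant under extension of scalars, and that the representability result of \cite{AB} is indeed available for non-abelian $G$ and not merely in the abelian setting treated in \cite{AGL} and \cite{GL} — and both are supplied by citation. Thus the heart of the matter is entirely upstream, and the final theorem is a clean corollary of the two-sided estimate.
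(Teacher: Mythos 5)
Your proposal is correct and follows exactly the paper's route: reduce to an algebraically closed field via invariance of graded codimensions under scalar extension, use the Aljadeff--Belov representability theorem to replace $A$ by $E(B)$ for a finite dimensional $G\times\mathbb{Z}_2$-graded algebra $B$ (as the paper does just before Theorem~\ref{mainth}), and then squeeze using the two-sided estimate of Theorem~\ref{mainth}. No discrepancies to report.
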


\end{document}